\documentclass[11pt]{amsart}
\usepackage[latin9]{inputenc}
\usepackage{color}
\usepackage{float}
\usepackage{units}
\usepackage{amstext}
\usepackage{amsthm}
\usepackage{amssymb}
\usepackage{graphicx}

\makeatletter

\providecommand{\tabularnewline}{\\}
\floatstyle{ruled}
\newfloat{algorithm}{tbp}{loa}
\providecommand{\algorithmname}{Algorithm}
\floatname{algorithm}{\protect\algorithmname}

\numberwithin{equation}{section}
\numberwithin{figure}{section}
\theoremstyle{plain}
\newtheorem{thm}{\protect\theoremname}

\usepackage{algorithmic} 
\usepackage{algorithm} 
\usepackage[numbers,sort,compress]{natbib}
\usepackage{graphicx}

\DeclareMathOperator*{\argmin}{arg\,min}




\theoremstyle{definition}

\numberwithin{equation}{section}

\title[SUPERIORIZING PRECONDITIONED CONJUGATE GRADIENT ALGORITHMS]{SUPERIORIZATION OF PRECONDITIONED CONJUGATE GRADIENT ALGORITHMS FOR TOMOGRAPHIC IMAGE RECONSTRUCTION}

\author[Helou, E. S.]{Elias S. Helou}

\address[Helou, E. S.]{Institute of Mathematical Science and Computation, University of S\~ao Paulo, SP, Brazil}
\email{{\tt elias@icmc.usp.br}}

\author[Herman, G. T.]{Gabor T. Herman}
\address[Herman, G. T.]{Department of Computer Science, The Graduate Center, City University of New York, NY, USA}

\author[Lin, C.]{Chuan Lin}
\address[Lin, C.]{School of Electrical Engineering, Southwest Jiaotong University, Chengdu, China}


\author[Zibetti, M. V. W.]{Marcelo V. W. Zibetti}
\address[Zibetti, M. V. W.]{Program in Electrical and Computer Engineering, Federal University of Technology, PR, Brazil}

\keywords{image reconstruction, superiorization, conjugate gradient, preconditioning}

\subjclass[2010]{15A29, 65F22, 65F10, 94A08}


\@ifundefined{showcaptionsetup}{}{%
 \PassOptionsToPackage{caption=false}{subfig}}
\usepackage{subfig}
\makeatother

\providecommand{\theoremname}{Theorem}

\begin{document}
\begin{abstract}
Properties of Superiorized Preconditioned Conjugate Gradient (SupPCG)
algorithms in image reconstruction from projections are examined.
Least squares (LS) is usually chosen for measuring data-inconsistency
in these inverse problems. Preconditioned Conjugate Gradient algorithms
are fast methods for finding an LS solution. However, for ill-posed
problems, such as image reconstruction, an LS solution may not provide
good image quality. This can be taken care of by superiorization.
A superiorized algorithm leads to images with the value of a secondary
criterion (a merit function such as the total variation) improved
as compared to images with similar data-inconsistency obtained by
the algorithm without superiorization. Numerical experimentation shows
that SupPCG can lead to high-quality reconstructions within a remarkably
short time. A theoretical analysis is also provided.
\end{abstract}

\maketitle

\section{Introduction}

Superiorization~\cite{Herman2012,hzm16,chj17,gah14,gh17} is an algorithmic
framework that consists in perturbing iterative methods. It can be
applied to feasibility problems (problems of finding a point belonging
to a given set) or to optimization problems (problems of finding the
best point among a set of candidates, according to some criterion).
Its aim is to improve the iterates, according to a secondary criterion
(which is some sort of merit function), produced by an iterative method
for solving the original problem without the secondary criterion.

Many problems in science or engineering can be mathematically modeled
as feasibility~\cite{com96} or optimization$\,$\cite{HERMAN:2009a}
problems. In this paper we use tomographic imaging as our example
application of the superiorized algorithms that we introduce, but
the basic idea is useful in numerous other contexts.\footnote{The reader can find a continuously updated bibliography about superiorization
at http://math.haifa.ac.il/YAIR/bib-superiorization-censor.html.}

We investigate a superiorized Preconditioned Conjugate Gradient~(PCG)
method, applying it to tomographic imaging and comparing it to other
approaches to this application. In Section~\ref{sec:opttomo} we
connect tomographic image reconstruction to optimization, as in~\cite{HERMAN:2009a}.
Section~\ref{sec:spcg} reviews superiorization, the Conjugate Gradient~(CG)
and the PCG algorithms. Superiorization of these algorithms results
in the algorithms Superiorized Conjugate Gradient~(SupCG) and Superiorized
Preconditioned Conjugate Gradient~(SupPCG), and superiorization of
one of the Algebraic Reconstruction Techniques~(ART)~\cite[ Chapter 11]{HERMAN:2009a}
leads to SupART. Section~\ref{sec:numerical} reports on numerical
experiments comparing the performance in image reconstruction of these
algorithms, as well as a version of ART that uses blob basis functions~\cite[Section 6.5]{HERMAN:2009a}.
Section~\ref{sec:Theorethical-Analysis} presents a theoretical analysis
of the SupPCG algorithm. Finally, Section~\ref{sec:conclusions-1}
provides the concluding remarks.

\section{Optimization in tomographic reconstruction}

\label{sec:opttomo}

Assume that the image $x\in\mathbb{R}^{n}$ generates the data $b\in\mathbb{R}^{m}$
through
\[
Rx=b_{exact}+e,
\]
where $R\in\mathbb{R}^{m\times n}$ is the projection (also called
Radon) matrix, \textbf{$b=b_{exact}+e$,} and $e\in\mathbb{R}^{m}$
is the unknown error vector introduced into the measurements. Therefore,
we need to solve $Rx\approx b\text{,}$ where the meaning of the approximation
must be well defined mathematically. There are several ways of doing
this, in the present paper we follow the least squares approach:
\begin{equation}
x\in\argmin_{y\in\mathbb{R}^{n}}\|Ry-b\|^{2}\text{.}\label{eq:least_squares}
\end{equation}
Because the least squares approach is well accepted and extremely
general, it is not surprising that several techniques have been developed
for its solution. The minimizers in (\ref{eq:least_squares}) are
given by the solutions of the normal equations
\begin{equation}
R^{T}Rx=R^{T}b\text{,}\label{eq:normal_equations}
\end{equation}
which is a linear system of equations. Due to the huge size and sparsity
of the projection matrix (as can be the case in image reconstruction),
iterative methods that successively approximate a solution may be
the most practical.

Among the best-known iterative techniques for the general problem
of least squares, one can point at CG and at PCG, to be applied to
the system~\eqref{eq:normal_equations}; see \cite{Vogel-2002}.
For tomographic image reconstruction, algebraic reconstruction techniques~(ART)~\cite[ Chapter 11]{HERMAN:2009a}
are widely applied.

To put this into a general context, let $f$ be a function defined
over images such that $f(x)$ is some measure of the inconsistency
of $x$ with the given data. The primary aim of a proposed reconstruction
method should be to produce an $x$ for which the value of $f$ is
relatively small, therefore $f$ is referred to as the primary criterion.
Here we concentrate on the squared error, $f(x)=\|Rx-b\|^{2}$, as
the primary criterion. We also discuss a primary criterion based on
a Bayesian approach to image reconstruction \cite{hhl79}, namely
\begin{equation}
r^{2}\left\Vert Rx-b\right\Vert ^{2}+\left\Vert x-\mu_{X}\right\Vert ^{2},\label{eq:Bayesian}
\end{equation}
where the number $r$ is the so-called signal-to-noise ratio and $\mu_{X}$
is a uniformly gray image with the gray value being the average value
of the image as can be accurately estimated based on all the measured
data \cite[Section 6.4]{HERMAN:2009a}. Both these need to be specified
for a particular reconstruction task.

\section{Superiorization of PCG and ART}

\label{sec:spcg}

\subsection{Superiorization}

As stated in \cite{chj17}:
\begin{quote}
The superiorization methodology is used for improving the efficacy
of iterative algorithms whose convergence is resilient to certain
kinds of perturbations. Such perturbations are designed to ``force''
the perturbed algorithm to produce more useful results for the intended
application than the ones that are produced by the original iterative
algorithm. The perturbed algorithm is called the ``superiorized version''
of the original unperturbed algorithm. If the original algorithm is
computationally efficient and useful in terms of the application at
hand and if the perturbations are simple and not expensive to calculate,
then the advantage of this method is that, for essentially the computational
cost of the original algorithm, we are able to get something more
desirable by steering its iterates according to the designed perturbations.
\end{quote}
Several iterative algorithms use an updating of the current approximation
to the problem at hand that is of the form
\[
\left(x_{k+1},u_{k+1}^{1},\dots,u_{k+1}^{\nu}\right)=\mathit{\mathrm{U}}\left(x_{k},u_{k}^{1},\dots,u_{k}^{\nu}\right),
\]
where the $u_{k}^{i}$ for $1\leq i\leq\nu$ are some auxiliary vectors
and $x_{k}$ is the image at iteration $k$. A superiorized version
of such a method can have the form
\begin{equation}
\left(x_{k+1},u_{k+1}^{1},\dots,u_{k+1}^{\nu}\right)=\mathit{\mathrm{U}}\left(x_{k+\nicefrac{1}{2}},u_{k}^{1},\dots,u{}_{k}^{\nu}\right)\text{,}\label{eq: sup_step}
\end{equation}
where $x_{k+\nicefrac{1}{2}}=x_{k}+s_{k}$, with $\{s_{k}\}$ referred
to as a superiorization sequence. For our experiments in Section \ref{sec:numerical}
we used the method from~\cite{Herman2012}, as specified in Algorithm~\ref{algo:superiorization},
for generating the superiorization sequence. The actual values produced
by that specification depend on the chosen secondary criterion and
the procedure to compute ``nonascending vectors'' for this criterion;
here we report on results for total variation ($TV$) as the secondary
criterion (see \cite{Herman2012,gah14,gh17}).\textcolor{blue}{{} }

\textcolor{black}{Total variation is defined for a vector $y\in\mathbb{R}^{n}$
that represents a discrete image $[y]$ with $l$ rows and $c$ columns
of pixels with $lc=n$. The value $[y]_{i,j}$ of the pixel in the
$i$th row and $j$th column of this image is defined to be the $(c(i-1)+j)$th
component of vector $y$. The definition of total variation is
\[
TV(y):=\sum_{i=1}^{l-1}\sum_{j=1}^{c-1}\sqrt{\left([y]_{(i,j)}-[y]_{(i+1,j)}\right)^{2}+\left([y]_{(i,j)}-[y]_{(i,j+1)}\right)^{2}}.
\]
}

\textcolor{black}{Algorithm~\ref{algo:superiorization} uses nonascending
vectors. Following~\cite{Herman2012}, given a point $y\in\mathbb{R}^{n}$,
we say that $t\in\mathbb{R}^{n}$ is a nonascending vector for $TV$
at $y$ if $\|t\|\leq1$ and there is $\Delta>0$ such that
\[
\delta\in[0,\Delta]\Rightarrow TV(y+\delta t)\leq TV(y).
\]
We denote the set of nonascending vectors for $TV$ at $y$ as $\tilde{\partial}TV(y)$.
Note that $\tilde{\partial}TV(y)$ is never empty because the zero
vector is in $\tilde{\partial}TV(y)$. In order to get the $t\in\tilde{\partial}TV(y)$
that we actually use, first define $\overline{t}\in\mathbb{R}^{n}$
componentwise. For any component of $\overline{t}$, its value is
the negative of the partial derivative of $TV$ at $y$ with respect
to that component, if this partial derivative is well defined, and
is $0$ otherwise. We finally define the nonascending vector $t\in\tilde{\partial}TV(y)$
that we use as
\[
t=\begin{cases}
\frac{\overline{t}}{\|\overline{t}\|}, & \text{if }\left\Vert \overline{t}\right\Vert \neq0,\\
\mathrm{the\:zero\:vector}, & \text{otherwise.}
\end{cases}
\]
That the $t$ specified this way is indeed a nonascending vector for
$TV$ at $y$ is an immediate consequence of~\cite[Theorem 2]{Herman2012}.}

The specification\textcolor{blue}{{} }\textcolor{black}{of Algorithm~}\textcolor{blue}{\ref{algo:superiorization}
}involves further parameters: $K\in\mathbb{N}$ (the number of nonascending
steps), $a\in(0,1)$ (the step-length diminishing factor for each
nonascending trial) and $\gamma\in(0,\infty)$ (the starting step-length);
for complete specification, these parameters need to be selected by
the user. Next we provide details of the purpose and operation of
Algorithm 1.

\begin{algorithm}[t]
\caption{$\mathrm{S}_{TV}(x,\ell,a,\gamma,K)$}

\label{algo:superiorization}

\normalsize{

\begin{algorithmic}[1]

\STATE{$y\leftarrow x$}

\STATE{\textbf{for $i=1,2,\dots,K$}}

\STATE{$\quad$$t\in\tilde{\partial}TV(y)$}

\STATE{$\quad$\textbf{repeat}}

\STATE{$\quad$$\quad$$\tilde{\gamma}\leftarrow\gamma a^{\ell}$}

\STATE{$\quad$$\quad$$\tilde{y}\leftarrow y+\tilde{\gamma}t$}

\STATE{$\quad$$\quad$$\ell\leftarrow\ell+1$}

\STATE{$\quad$\textbf{until} $TV(\tilde{y})\leq TV(y)$}

\STATE{$\quad$$y\leftarrow\tilde{y}$}

\STATE{$\textbf{return}$ $(y-x,\ell)$}

\end{algorithmic}}
\end{algorithm}

Algorithm~\ref{algo:superiorization} specifies the $s_{k}$ to be
used to define $x_{k+\nicefrac{1}{2}}=x_{k}+s_{k}$, which is then
further used to provide us with the next image $x_{k+1}$ of the superiorized
iterative algorithm, see (\ref{eq: sup_step}). As the iterations
proceed, changes are made not only to the image vector but also to
an integer variable $\ell$ (see Step 7 of Algorithm 1), these changes
are needed to make the superiorized algorithm behave as desired (see
\cite{Herman2012,zlh17} and Section \ref{sec:Theorethical-Analysis}
below). In the superiorized algorithm, $\ell$ is initialized to be
$0$ (that is why $\gamma$ is referred to as the starting step-length).
In each step~\eqref{eq: sup_step} of the superiorized iterative
algorithm, Algorithm 1 is called with the current values of the image
vector $x_{k}$, the integer $\ell$ and the user-specified parameters
$a$, $\gamma$ and $K$ (these do not change during an execution
of the superiorized algorithm). Algorithm~\ref{algo:superiorization}
returns $s_{k}\left(=y-x_{k}\right)$ and the new value of $\ell$,
to be used in the next iterative step of the algorithm. During the
execution of the superiorized algorithm, the step lengths $\tilde{\gamma}$
form a summable sequence as needed in the theoretical results on the
behavior of superiorized algorithms (see, for example, \cite[Theorem 1]{Herman2012}).

Superiorization has, by design, an influence over algorithmic convergence,
but ideally it maintains the required properties of the limit point.
The influence of the superiorization should be that the sequence of
iterates $\{x_{k}\}$ (or $\{x_{k+\nicefrac{1}{2}}\}$) for the superiorized
method has reduced secondary criterion (in our case, $TV$) when compared
to the equivalent sequence produced by the unsuperiorized method.
For our method this is true because the superiorization step, although
only guaranteed not to increase the value of the undifferentiable
$TV$ criterion, is frequently capable of actually reducing the value
of the $TV$. That is, one usually finds that $TV(x_{k+\nicefrac{1}{2}})<TV(x_{k})$.

We provide a theoretical discussion of the convergence of the SupPCG
algorithm in Section~\ref{sec:Theorethical-Analysis}. A convergence
proof of the SupCG algorithm appears in \cite{zlh17} and that result
is extended naturally to the superiorized version of the PCG algorithm,
which is a method that we now proceed to discuss.

\subsection{Preconditioned Conjugate Gradient\label{sub:Preconditioned-Conjugate-Gradien}}

The CG algorithm is designed to solve a linear system of equations
$Ax=y$, where $A$ is symmetric positive semi-definite. If $A=R^{T}R$
and $y=R^{T}b$, then CG can solve~\eqref{eq:least_squares} via~\eqref{eq:normal_equations}.
The CG method may converge slowly depending on the distribution of
the eigenvalues of the system matrix $A$. The idea of PCG is to replace
the system $Ax=y$ by a preconditioned system $MAx=My$, where $M$
is a symmetric positive-definite matrix, so that the two systems have
the same solutions but $MA$ has better spectral properties for the
application of the CG algorithm. We return to this topic later in
Section~\ref{sec:Theorethical-Analysis}.

\begin{algorithm}[t]
\caption{$\mathrm{U}_{\text{PCG}}$$(x,p,h)$}

\label{algo:PCG_step}

\begin{algorithmic}[1]

\STATE{\textbf{$g\leftarrow Ax-y$}}

\STATE{\textbf{$z\leftarrow Mg$}}

\STATE{\textbf{$\beta\leftarrow z^{T}h/p^{T}h$}}

\STATE{\textbf{$p\leftarrow-z+\beta p$}}

\STATE{$h\leftarrow Ap$}

\STATE{\textbf{$\alpha\leftarrow-g^{T}p/p^{T}h$}}

\STATE{\textbf{$x\leftarrow x+\alpha p$}}

\STATE{\textbf{return }$(x,p,h)$}

\end{algorithmic}
\end{algorithm}

Given $\mathrm{U}_{\text{PCG}}$ defined by Algorithm~\ref{algo:PCG_step},
the update for PCG algorithm is
\[
\left(x_{k+1},p_{k},h_{k}\right)=\mathrm{U_{\text{PCG}}}\left(x_{k},p_{k-1},h_{k-1}\right)\text{,}
\]
where $x_{0}\in\mathbb{R}^{n}$ is an arbitrary vector, but $x_{1}$,
$p_{0}$ and $h_{0}$ must follow precise rules, as described for
SupPCG in the next subsection. The preconditioning matrix $M$ that
we use here is of the form $F^{-1}DF$, where $F$ is the Discrete
Fourier Transform (DFT) and $D$ is a diagonal matrix that represents
a generalized Hamming window \cite{HERMAN:2009a} (using parameters
$\mu$ and $\rho$) combined with a ramp filter for frequency $\omega$
with $-\pi\leq\omega\leq\pi$:
\begin{equation}
h(\omega)=(|\omega|+\mu)\cdot(\rho+(1-\rho)\cos\omega).\label{eq:preconditioning}
\end{equation}

\subsection{Superiorized Preconditioned Conjugate Gradient\label{sub:Superiorized-Preconditioned-Conj}}

In this subsection we describe, as Algorithm~\ref{algo:SPCG}, the
SupPCG method, which is the main subject of the present paper. In
its specification, we use the notations $f(x)=\|Rx-b\|^{2}$, $A=R^{T}R$
and $y=R^{T}b$; see (\ref{eq:least_squares}) and (\ref{eq:normal_equations}).
The inputs of Algorithm~\ref{algo:SPCG} are the initial image $x_{0}$,
the $a$, $\gamma$ and $K$ that have the same roles as in Algorithm~\ref{algo:superiorization}
and a user-specified $\varepsilon>0$ that determines the termination
of Algorithm~\ref{algo:SPCG} (see Step 10). The algorithm returns
the number of iterations $k$ at the time of its termination and an
output image $x_{k-\nicefrac{1}{2}}$.

\begin{algorithm}[t]
\caption{SupPCG$\left(x_{0},a,\gamma,K,\varepsilon\right)$}

\label{algo:SPCG}

\normalsize{

\begin{algorithmic}[1]

\STATE{\textbf{$x_{\nicefrac{1}{2}}\leftarrow x_{0}$}}

\STATE{\textbf{$g_{0}\leftarrow Ax_{0}-y$}}

\STATE{\textbf{$z_{0}\leftarrow Mg_{0}$}}

\STATE{\textbf{$p_{0}\leftarrow-z_{0}$}}

\STATE{\textbf{$h_{0}\leftarrow Ap_{0}$}}

\STATE{\textbf{$\alpha\leftarrow-g_{0}^{T}p_{0}/p_{0}^{T}h_{0}$}}

\STATE{\textbf{$x_{1}\leftarrow x_{0}+\alpha p_{0}$}}

\STATE{\textbf{$k\leftarrow1$}}

\STATE{\textbf{$\ell_{1}\leftarrow0$}}

\STATE{\textbf{$\textbf{while}$ $f\left(x_{k-\nicefrac{1}{2}}\right)>\varepsilon$}}

\STATE{$\quad$\textbf{$(s_{k},\ell_{k+1})\leftarrow\mathrm{S}_{TV}\left(x_{k},\ell_{k},a,\gamma,K\right)$}}

\STATE{\textbf{$\quad$$x_{k+\nicefrac{1}{2}}\leftarrow x_{k}+s_{k}$}}

\STATE{$\quad$$(x_{k+1},p_{k},h_{k})\leftarrow\mathrm{U}_{\text{PCG}}\left(x_{k+\nicefrac{1}{2}},p_{k-1},h_{k-1}\right)$}

\STATE{$\quad$$k\leftarrow k+1$}

\STATE{$\textbf{return }\left(k,x_{k-\nicefrac{1}{2}}\right)$}

\end{algorithmic}}
\end{algorithm}

For the termination of Algorithm~\ref{algo:SPCG} to be guaranteed
by the results we present in Section~\ref{sec:Theorethical-Analysis},
we will need appropriate bounds on the elements of the superiorization
sequence $\left\{ s_{k}\right\} $; we now provide such bounds.
\begin{thm}
\textcolor{black}{Upon the execution of Step 11 in Algorithm~\ref{algo:SPCG},
we have that $\|s_{k}\|\leq K\gamma a^{(k-1)K}$. }\end{thm}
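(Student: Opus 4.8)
The plan is to control $\|s_k\|$ by tracking the auxiliary integer $\ell$, whose monotone growth forces the trial step lengths $\tilde\gamma=\gamma a^{\ell}$ to shrink geometrically as the outer iteration counter $k$ advances. First I would record the behaviour of $\ell$ within a single invocation $\mathrm{S}_{TV}(x_k,\ell_k,a,\gamma,K)$. The variable $\ell$ is never decremented: the only assignment to it is Step~7 of Algorithm~\ref{algo:superiorization}, which increments it by one. Moreover, the body of the \textbf{repeat} loop is executed at least once per pass of the outer \textbf{for} loop, and there are $K$ such passes; hence $\ell$ increases by at least $K$ during the call, so the returned value satisfies $\ell_{k+1}\ge \ell_k+K$. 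Since Algorithm~\ref{algo:SPCG} initializes $\ell_1=0$, a trivial induction yields $\ell_k\ge (k-1)K$ for every $k$. In particular, every value of $\ell$ used inside the $k$-th call is at least $(k-1)K$.

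Next I would bound each accepted increment. The vector $s_k$ returned by the $k$-th call equals the net displacement $\sum_{i=1}^{K}\tilde\gamma_i t_i$, where $t_i$ is the nonascending vector chosen at pass $i$ and $\tilde\gamma_i=\gamma a^{\ell}$ is the step length of the accepted trial at that pass (the earlier, rejected trials of the \textbf{repeat} loop are discarded). By the definition of a nonascending vector, $\|t_i\|\le 1$, and by the previous paragraph the exponent $\ell$ appearing in $\tilde\gamma_i$ satisfies $\ell\ge\ell_k\ge (k-1)K$. Because $a\in(0,1)$, the map $\ell\mapsto a^{\ell}$ is decreasing, so $\tilde\gamma_i\le \gamma a^{(k-1)K}$. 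The triangle inequality then gives
\[
\|s_k\|\le \sum_{i=1}^{K}\tilde\gamma_i\|t_i\|\le \sum_{i=1}^{K}\gamma a^{(k-1)K}=K\gamma a^{(k-1)K},
\]
which is the claimed bound.

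The only point requiring care---and the step I expect to be the main obstacle---is to justify that each \textbf{repeat} loop terminates, so that the finite sum above is actually well defined. This is where the defining property of a nonascending vector enters: for the chosen $t_i$ there is some $\Delta>0$ with $TV(y+\delta t_i)\le TV(y)$ whenever $\delta\in[0,\Delta]$. Since $\tilde\gamma=\gamma a^{\ell}\to 0$ as $\ell\to\infty$, after finitely many increments of $\ell$ one has $\tilde\gamma\le\Delta$, at which point the stopping test $TV(\tilde y)\le TV(y)$ is met and the loop exits (the case $t_i=0$ exits immediately, as then $\tilde y=y$). Once termination is secured, the remainder is purely the bookkeeping of the monotonically increasing $\ell$ together with a single application of the triangle inequality.
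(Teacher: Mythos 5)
Your proof is correct and follows essentially the same route as the paper's: an induction establishing $\ell_k \ge (k-1)K$ from the fact that each of the $K$ passes increments $\ell$ at least once, then the decomposition $s_k=\sum_{i=1}^{K}\gamma a^{\ell_{k,i}}t_{k,i}$ with $\|t_{k,i}\|\le 1$ and the triangle inequality. Your added justification that each \textbf{repeat} loop terminates (via the $\Delta>0$ in the definition of a nonascending vector, so that $\gamma a^{\ell}\le\Delta$ after finitely many increments) is a point the paper's proof leaves implicit, but it does not change the argument.
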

\begin{proof}
\textcolor{black}{To verify this assertion, we first note that the
vectors $t\in\tilde{\partial}TV(y)$ in Algorithm 1 satisfy $\|t\|\leq1$,
by the definition of nonascending vectors as given above. Next we
prove inductively that $\ell_{k}\geq(k-1)K$. This inequality holds
for $k=1$ as an equality. Now assume, for the induction, that it
holds for iteration $k$. The value of $\ell_{k+1}$ is determined
by Step 11 of Algorithm~\ref{algo:SPCG}, which consists in a call
$\mathrm{S}_{TV}\left(x_{k},\ell_{k},a,\gamma,K\right)$ to Algorithm~\ref{algo:superiorization}.
In each of the $K$ loops of Algorithm~\ref{algo:superiorization}
(see its Step 2), $\ell$ is increased at least once (in Step 7).
This and the induction hypothesis implies that for the $\ell_{k+1}$
returned by $\mathrm{S}_{TV}\left(x_{k},\ell_{k},a,\gamma,K\right)$,
we have $\ell_{k+1}\geq\ell_{k}+K\geq\left(k-1\right)K+K=kK=\left((k+1\right)-1)K;$
this completes the inductive proof.}

\textcolor{black}{We introduce some extra notations. Consider the
call $\mathrm{S}_{TV}(x_{k},\ell_{k},a,\gamma,K)$ to Algorithm~\ref{algo:superiorization}
in Algorithm~\ref{algo:SPCG}. In the execution of that call, there
is an inner loop (indexed by $i$) consisting of Steps 3-9 of Algorithm~\ref{algo:superiorization}.
We now use $t_{k,i}$ to denote the value of $t$ obtained by Step
3 in that inner loop and $\ell_{k,i}$ as $\ell-1$ at the time when
the condition $TV(\tilde{y})\leq TV(y)$ in Step 8 of the inner loop
is satisfied. Looking at $\mathrm{S}_{TV}\left(x_{k},\ell_{k},a,\gamma,K\right)$,
we see that for the $s_{k}$ returned by Step 11 of Algorithm~\ref{algo:SPCG}
we have that $s_{k}=\gamma\sum_{i=1}^{K}a^{\ell_{k,i}}t_{k,i}$. Since
$\left\Vert t_{k,i}\right\Vert \leq1$, $\ell_{k,i}\geq\ell_{k}\geq(k-1)K$
and $a\in(0,1)$, we conclude that $\left\Vert s_{k}\right\Vert \leq\gamma\sum_{i=1}^{K}\left\Vert a^{\ell_{k,i}}t_{k,i}\right\Vert \leq\gamma\sum_{i=1}^{K}a^{\ell_{k,i}}\left\Vert t_{k,i}\right\Vert \leq\gamma\sum_{i=1}^{K}a^{\ell_{k}}\leq K\gamma a^{(k-1)K}$.}
\end{proof}

\subsection{Superiorized Algebraic Reconstruction Technique}

The Algebraic Reconstruction Techniques consist of sequentially projecting
the current image toward the hyperplanes defined by the equations
of a linear system and we consider a full cycle through the equations
to be a single iteration. Several variants are possible, we use the
version originally proposed in \cite{hhl79} with pixel basis functions
(we refer to this as ART) and also with blob basis functions \cite{mhc98},
see alternatively \cite[Section 6.5]{HERMAN:2009a} (we refer to this
as \break{}BlobART). We also report on superiorization using the
$TV$ secondary criterion for the pixel basis, we refer to the so
superiorized ART as SupART.

To be more specific, the ART method proposed in \cite{hhl79} (see
alternatively \cite[Section 11.3]{HERMAN:2009a}) aims at finding
the minimizer of the primary criterion given in (\ref{eq:Bayesian}).
Therefore the signal-to-noise ratio $r$ is a parameter that needs
to be specified for both ART and for BlobART. There is an additional
parameter $\lambda$ for these algorithms, it is the so-called relaxation
parameter that determines the size of the step in projecting the current
image toward the next hyperplane. Mathematical convergence theory
allows the relaxations to vary with iterations \cite{HERMAN:2009a,hhl79},
and this may be useful in practice, but here we choose a single $\lambda$
for a reconstruction run.

\section{Numerical experimentation}

\label{sec:numerical}

\subsection{Tomographic data acquisition simulation}

We used data obtained from simulation of a phantom that includes features
such as seen in medical images \cite[Fig. 4.6(a)]{HERMAN:2009a}.
The phantom has low-contrast features of interest inside a high attenuation
skull-like structure. If shown in its full grayscale range, the inner
features are nearly invisible, therefore the phantom and reconstructions
are displayed using a gray scale in which linear attenuation values
smaller than $0.204\;\text{cm}^{-1}$ are displayed as black and values
higher than $0.21675\:\text{cm}^{-1}$ appear as white. Phantom and
tomographic data were obtained using SNARK14.\footnote{SNARK14 may be downloaded free of charge from http://turing.iimas.unam.mx/SNARK14M/.}
(SNARK14 is the most recent version of a series of software packages
for the reconstruction of 2D images from 1D projections. From the
point of view of the current paper, its main advantage over the previous
version SNARK09 \cite{KLUK13} is the new ability to superiorize automatically
any iterative algorithm.) The phantom can be seen at the bottom-right
of Fig.~\ref{fig:phantom_and_best_images}. It consists of an array
of $243\times243$ pixels, each of which corresponds to an area of
$0.0752\times0.0752\;\text{cm}^{2}$. Data were collected for $360$
equally spaced angles in $[0,\pi)$ and each of these angles was sampled
for $345$ parallel rays spaced at $0.0752\;\text{cm}$ between them.
Random photon emission at the x-ray source and scattering of the x-ray
photons was simulated.

\begin{figure}
\begin{centering}
\begin{minipage}[t]{0.5\columnwidth}%
\subfloat[PCG, iteration 2]{\centering{}%
\begin{minipage}[t]{1\columnwidth}%
\begin{center}
\includegraphics[width=0.7\textwidth]{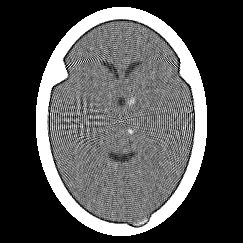}
\par\end{center}%
\end{minipage}}%
\end{minipage}%
\begin{minipage}[t]{0.5\columnwidth}%
\subfloat[SupPCG, iteration 3]{\centering{}%
\begin{minipage}[t]{1\columnwidth}%
\begin{center}
\includegraphics[width=0.7\textwidth]{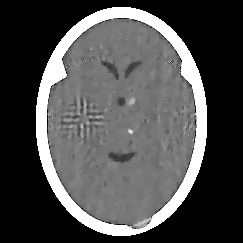}
\par\end{center}%
\end{minipage}}%
\end{minipage}
\par\end{centering}

\begin{centering}
\begin{minipage}[t]{0.5\columnwidth}%
\subfloat[CG, iteration 12]{\centering{}%
\begin{minipage}[t]{1\columnwidth}%
\begin{center}
\includegraphics[width=0.7\columnwidth]{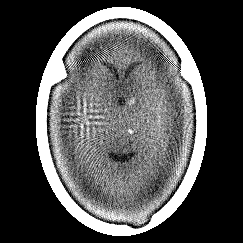}
\par\end{center}%
\end{minipage}}%
\end{minipage}%
\begin{minipage}[t]{0.5\columnwidth}%
\subfloat[SupCG, iteration 35]{\centering{}%
\begin{minipage}[t]{1\columnwidth}%
\begin{center}
\includegraphics[width=0.7\columnwidth]{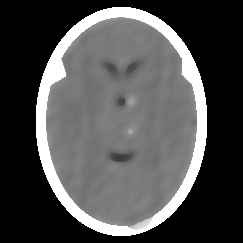}
\par\end{center}%
\end{minipage}}%
\end{minipage}
\par\end{centering}

\begin{centering}
\begin{minipage}[t]{0.5\columnwidth}%
\subfloat[ART, iteration 15]{\centering{}%
\begin{minipage}[t]{1\columnwidth}%
\begin{center}
\includegraphics[width=0.7\textwidth]{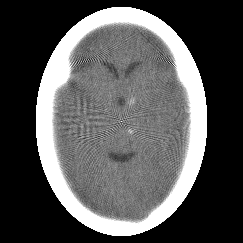}
\par\end{center}%
\end{minipage}}%
\end{minipage}%
\begin{minipage}[t]{0.5\columnwidth}%
\subfloat[SupART, iteration 12]{\centering{}%
\begin{minipage}[t]{1\columnwidth}%
\begin{center}
\includegraphics[width=0.7\textwidth]{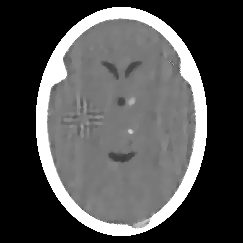}
\par\end{center}%
\end{minipage}}%
\end{minipage}
\par\end{centering}

\begin{centering}
\begin{minipage}[t]{0.5\columnwidth}%
\subfloat[BlobART, iteration 14]{\centering{}%
\begin{minipage}[t]{1\columnwidth}%
\begin{center}
\includegraphics[width=0.7\textwidth]{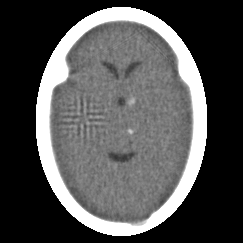}
\par\end{center}%
\end{minipage}}%
\end{minipage}%
\begin{minipage}[t]{0.5\columnwidth}%
\subfloat[Phantom]{\centering{}%
\begin{minipage}[t]{1\columnwidth}%
\begin{center}
\includegraphics[width=0.7\textwidth]{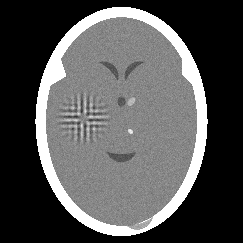}
\par\end{center}%
\end{minipage}}%
\end{minipage}
\par\end{centering}

\caption{Phantom and best images obtained by each algorithm. The best images
are those that for which (i) the parameters have been selected according
to the method described in Subsection \ref{sub:Algorithmic-parameter-selection}
and specified in Table \ref{table:optimal_pars}, and (ii) the value
of the selective error (the $SE$ of (\ref{eq:SE})) is the smallest
among all the ones plotted in Fig. \ref{fig:comparison_plot}. \label{fig:phantom_and_best_images}}
\end{figure}

\subsection{Algorithmic parameter selection\label{sub:Algorithmic-parameter-selection}}

The compared algorithms, with the exception of CG, have parameters
to be tuned. To select these parameters, we evaluated the quality
of the reconstructed image $x$ using the figure of merit called selective
error ($SE$) and then chose, for each algorithm separately, the parameters
that provided the best reconstructed image within the first $15$
iterations. $SE$ is defined by
\begin{equation}
SE(x)=C\sqrt{\sum_{(i,j)\in S}\left([x]_{i,j}-[x^{\dagger}]_{i,j}\right)^{2}}\text{,}\label{eq:SE}
\end{equation}
where $C$ is a constant dependent on the phantom $x^{\dagger}$ and
$S$ is the set of pixels contained in a centered ellipse with a $5$~cm
horizontal and a $7$~cm vertical semi-axis. This ellipse is just
inside the ``skull'' in the phantom.

All the superiorized methods share three parameters ($K$, $a$ and
$\gamma$, see Algorithm 1 and 3 ) that were drawn from the following
sets: $K\in\{10,20,40\}$, $a\in\{1-10^{-5},1-10^{-4},\dots,1-10^{-1}\}$,
$\gamma\in\{10^{-2},5\cdot10^{-2}\}$. PCG and SupPCG have two preconditioning
parameters: $\mu\in\{10^{-5},10^{-4},10^{-3}\}$ and $\rho\in\{0.4,0.6,0.8\}$;
see~(\ref{eq:preconditioning}). ART, BlobART and SupART use the
signal-to-noise ratio $r\in\{5,10\}$, which corresponds to the parameter
with the same symbol in~\cite[eq. (7)]{hhl79} (where the version
of ART we use here is described) and a relaxation parameter $\lambda\in\{10^{-2},5\cdot10^{-2},10^{-1},5\cdot10^{-1},1\}$;
see also \cite[Section 11.3]{HERMAN:2009a}. The combination of parameters,
within the sets specified above, that provided the best results are
listed in Table~\ref{table:optimal_pars}.

\begin{table}
\begin{centering}
{\footnotesize{}}%
\begin{tabular}{|c|c|}
\hline
{\footnotesize{}Method} & {\footnotesize{}Optimal Parameters}\tabularnewline
\hline
\hline
{\footnotesize{}SupCG} & {\footnotesize{}$(K,a,\gamma)=\left(40,1-10^{-5},5\cdot10^{-2}\right)$}\tabularnewline
\hline
{\footnotesize{}PCG} & {\footnotesize{}$(\mu,\rho)=\left(10^{-3},0.6\right)$}\tabularnewline
\hline
{\footnotesize{}SupPCG} & {\footnotesize{}$(K,a,\gamma,\mu,\rho)=\left(40,1-10^{-5},10^{-2},10^{-5},0.8\right)$}\tabularnewline
\hline
{\footnotesize{}ART and BlobART} & {\footnotesize{}$(r,\lambda)=\left(5,10^{-2}\right)$}\tabularnewline
\hline
{\footnotesize{}SupART} & {\footnotesize{}$(K,a,\gamma,r,\lambda)=\left(10,1-10^{-5},10^{-2},5,5\cdot10^{-2}\right)$}\tabularnewline
\hline
\end{tabular}
\par\end{centering}{\footnotesize \par}

\caption{Optimal parameters for compared algorithms.}
\label{table:optimal_pars}
\end{table}

\bigskip{}

\begin{figure}[t]
\includegraphics[width=1\columnwidth]{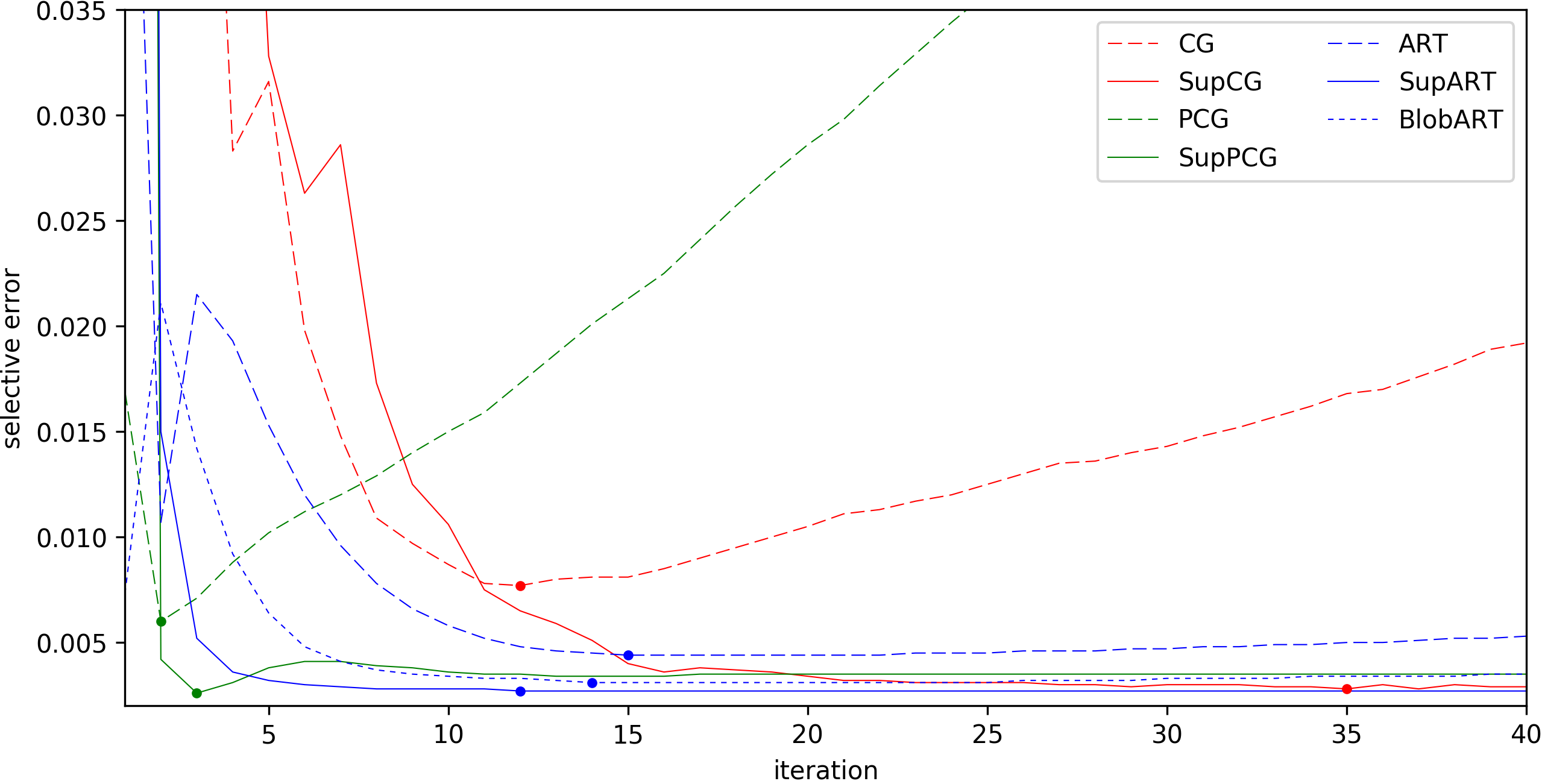}\caption{Iteration-wise comparison of $SE$ evolution among algorithms. Location
of the smallest $SE$ value on each curve is indicated.}
\label{fig:comparison_plot}
\end{figure}

\begin{figure}[t]
\includegraphics[width=1\columnwidth]{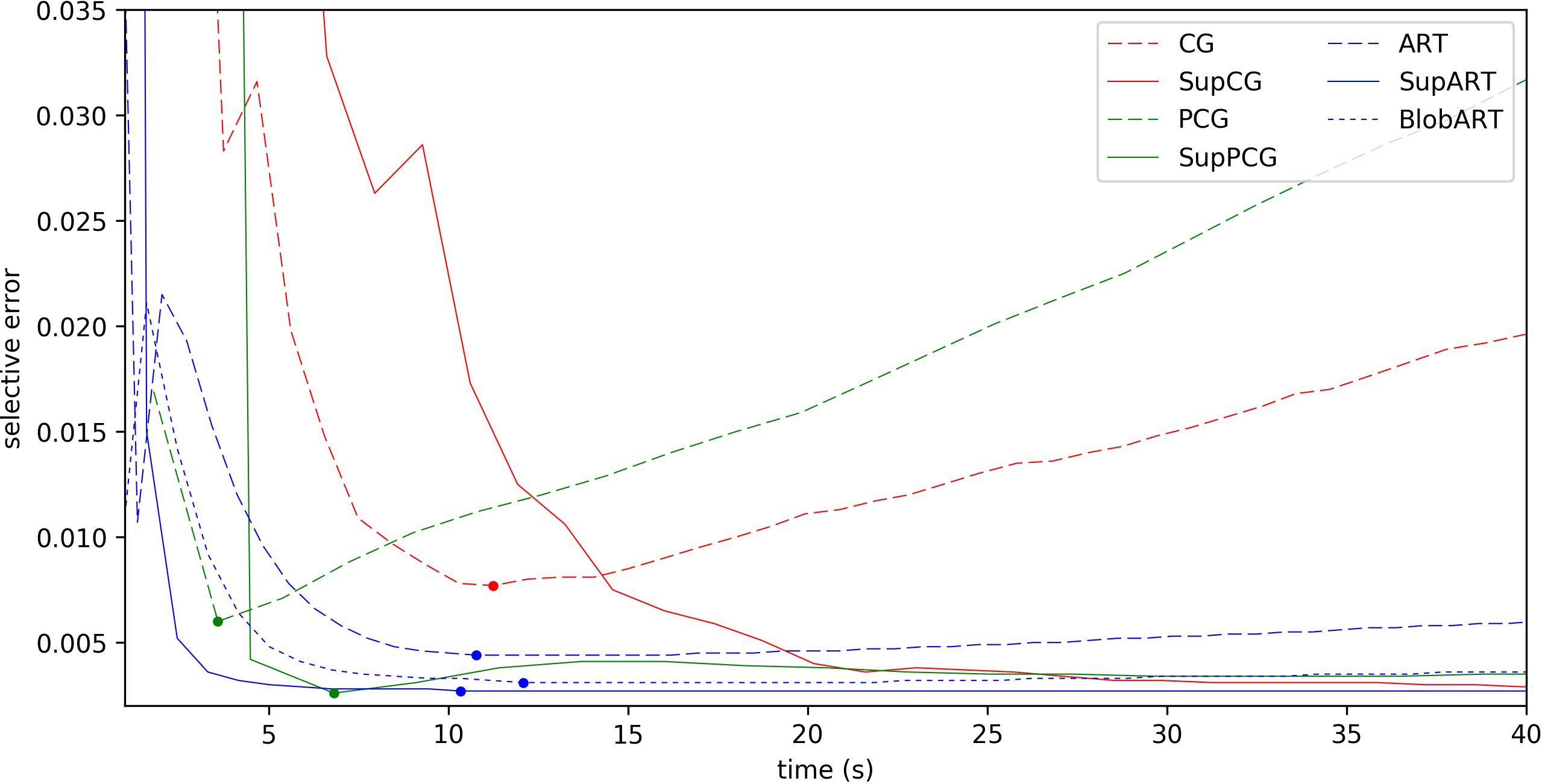}\caption{Time-wise comparison of $SE$ evolution among algorithms. Location
of the smallest $SE$ value on each curve is indicated.}
\label{fig:comparison_plot_time}
\end{figure}

\subsection{Numerical results}

In Fig. \ref{fig:comparison_plot} we present the evolution of the
$SE$ of (\ref{eq:SE}) as iterations proceed, while Figure~\ref{fig:comparison_plot_time}
exhibits a time-wise comparison among the methods. The computer used
to run the methods had an Intel i7 7700HQ processor running at up
to 3.4GHZ with 32GB of RAM available for the computations. We notice
that a superiorized method obtains better values for the $SE$ figure
of merit than the corresponding unsuperiorized version for almost
every iteration. This does not come as a surprise, for two reasons.
First, the superiorized version is supposed to incorporate better
prior knowledge about the sought-after image to be reconstructed through
the secondary criterion. Second, in the experiments we present, the
algorithmic parameter settings were tailored to obtain an improved
value of the figure of merit $SE$.

\section{Theoretical Analysis\label{sec:Theorethical-Analysis}}

In the present section we prove that the SupPCG method (Algorithm~\ref{algo:SPCG})
terminates under a suitable hypothesis on the superiorization sequence
$\left\{ s_{k}\right\} $ that is defined by calling $\mathrm{S}_{TV}(x_{k},a,\gamma,K)$.
Our proof is based on facts established in~\cite{zlh17}, which contains
a proof that SupCG terminates. The needed relation between the SupPCG
and the SupCG is established via the so-called Transformed PCG algorithm
(TPCG) and its superiorized version (SupTPCG). Next we discuss these
algorithms.

Since the preconditioning matrix $M$ is symmetric positive-definite,
there exists an invertible matrix $N$ such that $M=N^{T}N$ and,
for such a matrix, the eigenvalues of $MA$ are the same as the eigenvalues
of $NAN^{T}$ (if $v$ is an eigenvector of $MA$, then $N^{-T}v$
is an eigenvector of $NAN^{T}$ with the same associated eigenvalue).
Applying one step of the CG algorithm (which is exactly Algorithm~\ref{algo:PCG_step}
with $M$ the identity matrix) to the linear system $NAN^{T}\hat{x}=Ny$
provides one step of TPCG, as given in Algorithm~\ref{algo:TPCG_step}.
This step is also called from SupTPCG, see Step 13 of Algorithm~\ref{algo:STPCG}.

There is a strict equivalence between Algorithm~\ref{algo:SPCG}
and Algorithm~\ref{algo:STPCG}. To see this consider Table \ref{3-5_eqivalence}.
We claim, and this can be verified by following step-by-step SupPCG$\left(x_{0},a,\gamma,K,\varepsilon\right)$
and SupTPCG$\left(N^{-T}x_{0},a,\gamma,K,\varepsilon\right)$, that
the behavior of the two algorithms are identical via the stated equivalences.
In particular, since $f\left(x_{k-\nicefrac{1}{2}}\right)=f\left(N^{T}\hat{x}_{k-\nicefrac{1}{2}}\right)$,
the two algorithms terminate for the same value of $k$ and the two
images returned by the respective algorithms can be obtained from
each other using $x_{k+\nicefrac{1}{2}}=N^{T}\hat{x}_{k+\nicefrac{1}{2}}$.

\begin{algorithm}[t]
\caption{$\mathrm{U}_{\text{TPCG}}$$\left(\hat{x},\hat{p},\hat{h}\right)$}

\label{algo:TPCG_step}

\begin{algorithmic}[1]

\STATE{\textbf{$\hat{g}\leftarrow NAN^{T}\hat{x}-Ny$}}

\STATE{\textbf{$\hat{z}\leftarrow\hat{g}$}}

\STATE{\textbf{$\beta\leftarrow\hat{z}{}^{T}\hat{h}/\hat{p}^{T}\hat{h}$}}

\STATE{\textbf{$\hat{p}\leftarrow-\hat{z}+\beta\hat{p}$}}

\STATE{$\hat{h}\leftarrow NAN^{T}\hat{p}$}

\STATE{\textbf{$\alpha\leftarrow-\hat{g}^{T}\hat{p}/\hat{p}^{T}\hat{h}$}}

\STATE{\textbf{$\hat{x}\leftarrow\hat{x}+\alpha\hat{p}$}}

\STATE{\textbf{return }$\left(\hat{x},\hat{p},\hat{h}\right)$}

\end{algorithmic}
\end{algorithm}

\begin{algorithm}[t]
\caption{SupTPCG$\left(\hat{x}_{0},a,\gamma,K,\varepsilon\right)$}

\label{algo:STPCG}

\normalsize{

\begin{algorithmic}[1]

\STATE{\textbf{$\hat{x}_{\nicefrac{1}{2}}\leftarrow\hat{x}_{0}$}}

\STATE{\textbf{$\hat{g}_{0}\leftarrow NAN^{T}\hat{x}_{0}-Ny$}}

\STATE{\textbf{$\hat{z}_{0}\leftarrow\hat{g}_{0}$}}

\STATE{\textbf{$\hat{p}_{0}\leftarrow-\hat{z}_{0}$}}

\STATE{\textbf{$\hat{h}_{0}\leftarrow NAN^{T}\hat{p}_{0}$}}

\STATE{\textbf{$\alpha\leftarrow-\hat{g}_{0}^{T}\hat{p}_{0}/\hat{p}_{0}^{T}\hat{h}_{0}$}}

\STATE{\textbf{$\hat{x}_{1}\leftarrow\hat{x}_{0}+\alpha\hat{p}_{0}$}}

\STATE{\textbf{$k\leftarrow1$}}

\STATE{\textbf{$\ell_{1}\leftarrow0$}}

\STATE{\textbf{$\textbf{while}$ $f\left(N^{T}\hat{x}_{k-\nicefrac{1}{2}}\right)>\varepsilon$}}

\STATE{$\quad$\textbf{$\left(s_{k},\ell_{k+1}\right)\leftarrow\mathrm{S}_{TV}\left(N^{T}\hat{x}_{k},\ell_{k},a,\gamma,K\right)$}}

\STATE{\textbf{$\mbox{\quad}\hat{x}_{k+\nicefrac{1}{2}}\leftarrow\hat{x}_{k}+N^{-T}s_{k}$}}

\STATE{$\quad$$\left(\hat{x}_{k+1},\hat{p}_{k},\hat{h}_{k}\right)\leftarrow\mathrm{U}_{\text{TPCG}}\left(\hat{x}_{k+\nicefrac{1}{2}},\hat{p}_{k-1},\hat{h}_{k-1}\right)$}

\STATE{$\quad$$k\leftarrow k+1$}

\STATE{$\textbf{return }\left(k,\hat{x}_{k-\nicefrac{1}{2}}\right)$}

\end{algorithmic}}
\end{algorithm}

\begin{table}
\begin{tabular}{|c|c|}
\hline
Algorithm 3 & Algorithm 5\tabularnewline
\hline
\hline
$x_{i}=N^{T}\hat{x}_{i}$ & $\hat{x}_{i}$\tabularnewline
\hline
$g_{0}$ & $\hat{g}_{0}=Ng_{0}$\tabularnewline
\hline
$z_{0}=N^{T}\hat{z}_{0}$ & $\hat{z}_{0}$\tabularnewline
\hline
$p_{k}=N^{T}\hat{p}_{k}$ & $\hat{p}_{k}$\tabularnewline
\hline
$h_{k}$ & $\hat{h}_{k}=Nh_{k}$\tabularnewline
\hline
\end{tabular}\caption{Equivalences of the variables in Algorithm~\ref{algo:SPCG} and Algorithm~\ref{algo:STPCG}.\textcolor{black}{{}
The $i$ in the table can be $k$, $k-\nicefrac{1}{2}$, or $k+\nicefrac{1}{2}$
for an integer $k$.}}

\label{3-5_eqivalence}
\end{table}

Next we show the equivalence of Algorithm \ref{algo:STPCG} and Algorithm
7 of \cite{zlh17} under some assumptions. To make the current paper
self-contained, we reproduce here (as Algorithm \ref{algo:SupCG})
Algorithm 7 of \cite{zlh17}. We keep the boldface notation of~\cite{zlh17}
in order to avoid confusion with the symbols used in the algorithms
introduced in the present paper. Consistently with the notation of
that previous paper, the function $f'$ in Algorithm 7 is defined
by
\begin{align}
f'(\mathbf{x}) & =\frac{1}{2}\|\mathbf{A}\mathbf{x}-\mathbf{y}\|^{2},\label{eq: f'(x)}
\end{align}
where $\mathbf{\mathit{\mathbf{A}}}=RN^{T}$ and $\mathbf{y}=b$.

As already defined in Subsection \ref{sub:Superiorized-Preconditioned-Conj},
$A=R^{T}R$ and $y=R^{T}b$. By setting $\mathbf{\mathit{\mathbf{A}}}$
and $\mathbf{\mathbf{y}}$ in Algorithm 8 of \cite{zlh17} (restated
here as Algorithm~\ref{algo:UCG_step}) to be as we have just defined
them, we see that Algorithm~\ref{algo:TPCG_step} is equivalent to
Algorithm 8 of \cite{zlh17} in the sense that if we have in the input
of the two algorithms the equivalences $\mathbf{x}=\hat{x}$, $\mathbf{p}=\hat{p}$,
and $\mathbf{h}=\hat{h}$, then $\mathbf{g}=\hat{g}$ (to see this,
notice that in Algorithm~\ref{algo:TPCG_step} we have $Ny=NR^{T}b=\mathbf{A}^{T}\mathbf{y}$,
and $NAN^{T}=NR^{T}RN^{T}=\mathbf{A}^{T}\mathbf{A}$), the $\beta$
in the two algorithms are equal (since $\hat{z}=\hat{g}$) and, therefore,
we also have that the output values satisfy $\mathbf{p}=\hat{p}$,
$\mathbf{h}=\hat{h}$ (because $NAN^{T}=\mathbf{A}^{T}\mathbf{A}$)
and $\mathbf{x}=\hat{x}$ (since, as is easily verified, the value
of $\alpha$ is the same for the two algorithms). Thus, for identical
inputs, Algorithms 4 and 6 will return identical outputs.

\begin{algorithm}[t]
\caption{$\mathrm{U}_{\text{CG}}$$(\mathbf{x},\mathbf{p},\mathbf{h})$}

\label{algo:UCG_step}

\begin{algorithmic}[1]

\STATE{\textbf{$\mathbf{g}\leftarrow\mathbf{A}^{T}(\mathbf{A\mathbf{x}}-\mathbf{y})$}}

\STATE{\textbf{$\beta\leftarrow\mathbf{g}{}^{T}\mathbf{h}/\mathbf{p}^{T}\mathbf{h}$}}

\STATE{\textbf{$\mathbf{p}\leftarrow-\mathbf{g}+\beta\mathbf{p}$}}

\STATE{$\mathbf{h}\leftarrow\mathbf{A}^{T}\mathbf{A}\mathbf{p}$}

\STATE{\textbf{$\alpha\leftarrow-\mathbf{g}{}^{T}\mathbf{p}/\mathbf{p}^{T}\mathbf{h}$}}

\STATE{\textbf{$\mathbf{x}\leftarrow\mathbf{x}+\alpha\mathbf{p}$}}

\STATE{\textbf{return }$(\mathbf{x},\mathbf{p},\mathbf{h})$}

\end{algorithmic}
\end{algorithm}

\begin{algorithm}[t]
\caption{SupCG$\left(\mathbf{x}_{0},a,\gamma,K,\varepsilon'\right)$}

\label{algo:SupCG}

\begin{algorithmic}[1]

\STATE{\textbf{$\mathbf{x}_{\nicefrac{1}{2}}\leftarrow\mathbf{x}_{0}$}}

\STATE{\textbf{$\mathbf{g}_{0}\leftarrow\mathbf{A}^{T}(\mathbf{A\mathbf{x}}_{0}-\mathbf{y})$}}

\STATE{\textbf{$\mathbf{p}_{0}\leftarrow-\mathbf{g}_{0}$}}

\STATE{$\mathbf{h}_{0}\leftarrow\mathbf{A}^{T}\mathbf{A}\mathbf{p}_{0}$}

\STATE{\textbf{$\alpha\leftarrow-\mathbf{g}{}_{0}^{T}\mathbf{p}_{0}/\mathbf{p}_{0}^{T}\mathbf{h}_{0}$}}

\STATE{\textbf{$\mathbf{x}_{1}\leftarrow\mathbf{x}_{0}+\alpha\mathbf{p}_{0}$}}

\STATE{\textbf{$k\leftarrow1$}}

\STATE{\textbf{$\textbf{while}$ $f'(\mathbf{x}_{k-\nicefrac{1}{2}})>\varepsilon'$}}

\STATE{\textbf{$\mbox{\quad}\mathbf{x}_{k+\nicefrac{1}{2}}\leftarrow\text{\textnormal{perturbed}}\left(\mathbf{x}_{k}\right)$}}

\STATE{$\quad$$\left(\mathbf{x}_{k+1},\mathbf{p}_{k},\mathbf{h}_{k}\right)\leftarrow\mathrm{U}_{\text{PCG}}\left(\mathbf{x}_{k+\nicefrac{1}{2}},\mathbf{p}_{k-1},\mathbf{h}_{k-1}\right)$}

\STATE{$\quad$$k\leftarrow k+1$}

\STATE{$\textbf{return }\left(k,\mathbf{x}_{k-\nicefrac{1}{2}}\right)$}

\end{algorithmic}
\end{algorithm}

To show the equivalence of Algorithm \ref{algo:STPCG} and Algorithm
\ref{algo:SupCG}, we associate $\mathbf{\mathit{\mathbf{x}}}_{k}$,
$\mathbf{\mathit{\mathbf{x}}}_{k+\nicefrac{1}{2}}$, $\mathbf{\mathbf{g}}_{k}$,
$\mathbf{\mathbf{h}}_{k}$, $\mathbf{p}_{k}$ in Algorithm~\ref{algo:SupCG}
with $\hat{x}_{k}$, $\hat{x}_{k+\nicefrac{1}{2}}$, $\hat{g}_{k}$,
$\hat{h}_{k}$, $\hat{p}_{k}$ in Algorithm~\ref{algo:STPCG} by
$\mathbf{\mathit{\mathbf{x}}}_{k}=\hat{x}_{k}$, $\mathbf{x}_{k+\nicefrac{1}{2}}=\hat{x}_{k+\nicefrac{1}{2}}$,
$\mathbf{g}_{k}=\hat{g}_{k}$, $\mathbf{\mathbf{h}}_{k}=\hat{h}_{k}$,
$\mathbf{\mathbf{p}}_{k}=\hat{p}_{k}$. We now compare the step-by-step
executions of SupTPCG$\left(\hat{x}_{0},a,\gamma,K,\varepsilon\right)$
and SupCG$\left(\mathbf{x}_{0},a,\gamma,K,\varepsilon'\right)$, with
$\mathbf{\mathit{\mathbf{x}}}_{0}=\hat{x}_{0}$ and $\varepsilon'=\frac{\varepsilon}{2}$.
Clearly, after Step 1 in the two algorithms $\mathbf{\mathit{\mathbf{x}}}_{\nicefrac{1}{2}}=\hat{x}_{\nicefrac{1}{2}}$.
That $\mathbf{g}_{0}=\hat{g}_{0}$ follows from the facts that $Ny=\mathbf{A}^{T}\mathbf{y}$,
and $NAN^{T}=\mathbf{A}^{T}\mathbf{A}$. Continuing in this fashion
it is trivial to check that just before entering the $\mathbf{while}$
statement for the first time in the two algorithms the values of the
associated vectors match as stated above.

We now consider the condition that appears in the $\mathbf{while}$
statement in the algorithms. We claim that if just before entering
the $\mathbf{while}$ in the two algorithms the values of the associated
vectors match as stated above, then the condition is either satisfied
in both algorithms or is not satisfied in both algorithm. Indeed,
due to the definition of $f$ as the squared error and (\ref{eq: f'(x)}),
\[
f\left(N^{T}\hat{x}_{k-\nicefrac{1}{2}}\right)=\left\Vert RN^{T}\hat{x}_{k-\nicefrac{1}{2}}-b\right\Vert ^{2}=\left\Vert \mathbf{A}\mathbf{x}_{k-\nicefrac{1}{2}}-\mathbf{y}\right\Vert ^{2}=2f'\left(\mathbf{x}_{k-\nicefrac{1}{2}}\right).
\]
Since $\varepsilon'=\frac{\varepsilon}{2}$, our claim on the satisfaction
of the conditions is valid.

We specify the perturbation operator in Step 9 of Algorithm~\ref{algo:SupCG}
by
\[
\mathrm{perturbed}\left(\mathbf{x}_{k}\right)=\mathbf{x}_{k}+N^{-T}s_{k},
\]
where $s_{k}$ is the same as given by Algorithm~\ref{algo:STPCG}.
Using the above associations and facts, we see that the behavior of
Algorithm \ref{algo:STPCG} is equivalent to that of Algorithm~\ref{algo:SupCG},
and, therefore, to that of Algorithm~7 of \cite{zlh17}.

Theorem A.1 of \cite{zlh17} says that SupCG$\left(\mathbf{x}_{0},a,\gamma,K,\varepsilon'\right)$
terminates provided that $\varepsilon'>\varepsilon_{0}'$, where
\[
\varepsilon_{0}'=\min_{\mathbf{x}}\frac{1}{2}\|\mathbf{A}\mathbf{x}-\mathbf{y}\|^{2}.
\]
Recalling the definitions of $\mathbf{A}$ and $\mathbf{y}$ after
(\ref{eq: f'(x)}) and the fact that $N$ is invertible, we get that
$\varepsilon'_{0}=\frac{1}{2}\varepsilon_{0,}$ for
\[
\varepsilon_{0}=\min_{\hat{x}}\left\Vert RN^{T}\hat{x}-b\right\Vert ^{2}=\min_{x}\left\Vert Rx-b\right\Vert ^{2}.
\]
This together with the comparison of the the step-by-step executions
of SupTPCG$\left(\hat{x}_{0},a,\gamma,K,\varepsilon\right)$ and SupCG$\left(\mathbf{x}_{0},a,\gamma,K,\varepsilon'\right)$,
with $\mathbf{\mathit{\mathbf{x}}}_{0}=\hat{x}_{0}$ and $\varepsilon'=\frac{\varepsilon}{2}$,
gives us our promised termination result for SupPCG:
\begin{thm}
Given any positive number $\varepsilon$ such that $\varepsilon>\varepsilon_{0}$,
with
\begin{equation}
\varepsilon_{0}=\min_{x}\left\Vert Rx-b\right\Vert ^{2},\label{eq: epsilon_0}
\end{equation}
Algorithm~\ref{algo:SPCG} terminates within a finite number of iterations.
\end{thm}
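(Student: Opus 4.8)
The plan is to prove termination of SupPCG not directly, but by transferring it across the two algorithmic equivalences assembled above, ultimately reducing to the already-established termination of SupCG (Algorithm~7 of \cite{zlh17}, restated here as Algorithm~\ref{algo:SupCG}). The whole argument is a bookkeeping chain, SupPCG $\leftrightarrow$ SupTPCG $\leftrightarrow$ SupCG, with a factor-of-two rescaling of the stopping threshold inserted at the second link.

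First I would fix the inputs so that the executions are coupled. Running SupPCG$\left(x_0,a,\gamma,K,\varepsilon\right)$ alongside SupTPCG$\left(N^{-T}x_0,a,\gamma,K,\varepsilon\right)$, the variable correspondence recorded in Table~\ref{3-5_eqivalence} (in particular $x_i=N^T\hat{x}_i$) makes the two run in lockstep. Because $f\left(x_{k-\nicefrac{1}{2}}\right)=f\left(N^T\hat{x}_{k-\nicefrac{1}{2}}\right)$, their \textbf{while}-tests agree at every $k$, so the two enter and leave the loop at exactly the same iteration; hence SupPCG terminates if and only if SupTPCG does, after the identical number of steps.

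Next I would use the second equivalence, with $\mathbf{A}=RN^T$, $\mathbf{y}=b$, $\mathbf{x}_0=\hat{x}_0$, and threshold $\varepsilon'=\nicefrac{\varepsilon}{2}$. The identity
\[
f\left(N^T\hat{x}_{k-\nicefrac{1}{2}}\right)=\left\Vert \mathbf{A}\mathbf{x}_{k-\nicefrac{1}{2}}-\mathbf{y}\right\Vert^2=2f'\left(\mathbf{x}_{k-\nicefrac{1}{2}}\right)
\]
shows that the test $f\left(N^T\hat{x}_{k-\nicefrac{1}{2}}\right)>\varepsilon$ is equivalent to $f'\left(\mathbf{x}_{k-\nicefrac{1}{2}}\right)>\varepsilon'$; combined with the matching of all auxiliary vectors, SupTPCG and SupCG again halt at the same $k$. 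Finally I would invoke Theorem~A.1 of \cite{zlh17}: SupCG terminates whenever $\varepsilon'>\varepsilon'_0:=\min_{\mathbf{x}}\frac{1}{2}\|\mathbf{A}\mathbf{x}-\mathbf{y}\|^2$. Since $N$ is invertible, the substitution $x=N^T\hat{x}$ gives $\min_x\|Rx-b\|^2=\min_{\hat{x}}\|RN^T\hat{x}-b\|^2$, so $\varepsilon'_0=\nicefrac{\varepsilon_0}{2}$; then $\varepsilon>\varepsilon_0$ yields $\varepsilon'=\nicefrac{\varepsilon}{2}>\nicefrac{\varepsilon_0}{2}=\varepsilon'_0$, which is exactly the hypothesis the cited theorem requires. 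Reading the equivalences backwards transports finite termination from SupCG to SupPCG.

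The main obstacle is not any single computation but the care required in the threshold translation across the three layers. I must verify that the strict inequality survives the factor-of-two rescaling, that the invertibility of $N$ (rather than mere positive-definiteness of $M$) is what guarantees the transformed least-squares problem shares the same optimal value so that no minimum is lost under the change of variables, and that each equivalence is strong enough to match the \textbf{while}-loop exit itself, so that genuine termination, and not merely agreement of limit points, is what transfers. Essentially all of these checks have already been carried out in the text preceding the statement, so the proof reduces to the short step of assembling them.
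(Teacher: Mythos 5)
Your proposal is correct and follows essentially the same route as the paper: the chain SupPCG $\leftrightarrow$ SupTPCG (via the correspondence $x_i = N^{T}\hat{x}_i$ of Table~\ref{3-5_eqivalence}) $\leftrightarrow$ SupCG (with $\mathbf{A}=RN^{T}$, $\mathbf{y}=b$, $\varepsilon'=\nicefrac{\varepsilon}{2}$), followed by an appeal to Theorem~A.1 of \cite{zlh17} and the observation that invertibility of $N$ gives $\varepsilon'_0=\nicefrac{\varepsilon_0}{2}$. This is precisely the argument the paper assembles in the text preceding the theorem.
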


\section{Concluding remarks}

\label{sec:conclusions-1}

We have discussed the superiorized version of the Preconditioned Conjugate
Gradient method (SupPCG) for tomographic image reconstruction. Experimental
work has been presented indicating that, when compared to several
other methods, SupPCG produces images of good quality within a short
time. Furthermore, we have proved that the algorithm produces, in
a finite number of steps, an image whose data-inconsistency is no
worse than what is specified by the user, provided that such an image
exists.

\section*{Acknowledgments}

The research of E. S. Helou was funded by FAPESP grants 2013/07375-0
and 2016/24286-9. The research of M. V. W. Zibetti was partially supported
by CNPq grant 475553/2013-6. The research of C. Lin was partially
supported by China Scholarship Counci\textcolor{black}{l. We are grateful
to Christoph Scharf for essential help with programming the PCG algorithm
in the SNARK14 environment.}

\end{document}